\title{Rating Alternatives from Pairwise Comparisons by Solving Tropical Optimization Problems}
\author{N. Krivulin\thanks{Faculty of Mathematics and Mechanics, Saint Petersburg State University, 28 Universitetsky Ave., St.~Petersburg, 198504, Russia, 
nkk@math.spbu.ru.}
}
\date{}
\newtheorem{theorem}{Theorem}
\newtheorem{lemma}[theorem]{Lemma}
\theoremstyle{definition}
\newtheorem{example}{Example}
\begin{document}

\maketitle

\begin{abstract}
We consider problems of rating alternatives based on their pairwise comparison under various assumptions, including constraints on the final scores of alternatives. The problems are formulated in the framework of tropical mathematics to approximate pairwise comparison matrices by reciprocal matrices of unit rank, and written in a common form for both multiplicative and additive comparison scales. To solve the unconstrained and constrained approximation problems, we apply recent results in tropical optimization, which provide new complete direct solutions given in a compact vector form. These solutions extend known results and involve less computational effort. As an illustration, numerical examples of rating alternatives are presented.
\\

\textbf{Key-Words:} tropical mathematics, idempotent semifield, constrained optimization problem, matrix approximation, reciprocal matrix, pairwise comparison, analysis of preferences.
\\

\textbf{MSC (2010):} 65K10, 15A80, 41A50, 90B50, 91B08
\end{abstract}

\section{Introduction}
Tropical (idempotent) mathematics, which studies idempotent semirings \cite{Kolokoltsov1997Idempotent,Golan2003Semirings,Heidergott2006Maxplus,Akian2007Maxplus,Litvinov2007Themaslov,Gondran2008Graphs,Speyer2009Tropical,Butkovic2010Maxlinear}, finds increasing applications in solving real-world problems in various fields, including decision making. To solve these problems in the framework of tropical mathematics, they are formulated as optimization problems to minimize or maximize functions defined on vectors over idempotent semifields (see, e.g., an overview in \cite{Krivulin2014Tropical}).

One of the applications of tropical optimization is concerned with the analysis of preferences by using pairwise comparison data in decision making. A problem of rating alternatives from their pairwise comparison matrix is examined in \cite{Elsner2004Maxalgebra,Elsner2010Maxalgebra,Tran2013Pairwise}. The problem is solved as an optimization problem in terms of tropical mathematics. The vector of final scores of alternatives is found as a tropical eigenvector of the matrix.

In this paper, we offer new solutions to the problems of rating alternatives under various assumptions, including constraints on the final scores of alternatives. The problems are formulated in the framework of tropical optimization to approximate pairwise comparison matrices by reciprocal matrices of unit rank. We consider unconstrained and constrained approximation of one matrix, and  simultaneous approximation of several matrices. The approximation problems are written in a common form for both multiplicative and additive comparison scales. To solve the problems, we apply recent results in \cite{Krivulin2014Aconstrained,Krivulin2015Amultidimensional,Krivulin2015Extremal}, which provide new complete, direct solutions given in a compact vector form. These new solutions extend known results and involve less computational effort. As an illustration, numerical examples of rating alternatives from pairwise comparison matrices are presented.

\section{Rating Alternatives from Pairwise Comparison}
\label{S-RAFPC}

Pairwise comparison techniques are widely used to obtain and arrange source data in the analysis of preferences in decision making (see, e.g., \cite{Thurstone1927Alaw,Saati1980Theanalytic,David1988Method}). Given results of pairwise comparison of alternatives on an appropriate scale, the analysis focuses on forming judgment on the overall preference of each alternative by evaluating its individual rating (score, priority).

\subsection{Pairwise Comparison Matrices}

The results of comparing alternatives in pairs with multiplicative or additive scales are described by pairwise comparison matrices that have a specific antisymmetric form. Let $\bm{A}=(a_{ij})$ be a pairwise comparison matrix. If the matrix is obtained on the basis of a multiplicative scale, then each entry $a_{ij}$ shows that alternative $i$ is preferred to $j$ by $a_{ij}$ times. The multiplicative comparison matrix is reciprocal, which means that its entries are positive and satisfy the condition
$$
a_{ij}
=
1/a_{ji}.
$$

In the case of additive scale, the entry $a_{ij}$ in the matrix $\bm{A}$ indicates by how many score units the preference of $i$ is greater than that of $j$. Then, the matrix $\bm{A}$ is skew-symmetric with the entries that answer the equality
$$
a_{ij}
=
-a_{ji}.
$$

In practice, the matrices composed of the results from pairwise comparisons on a multiplicative (additive) scale may be not reciprocal (skew-symmetric), and thus need corrections.  

To provide consistency and interpretability of the preference relation, the results of pairwise comparison have to be transitive, which implies that the entries of the multiplicative (additive) comparison matrix comply with the equality
$$
a_{ij}
=
a_{ik}a_{kj}
\qquad
(a_{ij}
=
a_{ik}+a_{kj}).
$$ 

A pairwise comparison matrix with transitive entries is called consistent. Every consistent matrix has a well-known form defined by a vector. Specifically, for any multiplicative (additive)  consistent matrix $\bm{A}=(a_{ij})$, there is a vector $\bm{x}=(x_{i})$ that completely specifies the entries of $\bm{A}$ as follows:
$$
a_{ij}
=
x_{i}/x_{j}
\qquad
(a_{ij}
=
x_{i}-x_{j}).
$$

At the same time, if a matrix $\bm{A}$ is consistent, then its corresponding vector $\bm{x}$ can be considered to specify directly the individual overall scores of alternatives, and thus provides the result, for which the analysis of preference is undertaken.

\subsection{Approximation by Consistent Matrices}

The matrices of pairwise comparison, which appear in real-world applications, are generally inconsistent, and may even have a non-antisymmetric form due to various reasons, from limitations in human judgment to data errors. This leads to a problem of approximating a matrix $\bm{A}$ obtained from pairwise comparisons by a consistent matrix $\bm{X}$, which is formulated to
\begin{equation}
\begin{aligned}
&
\text{minimize}
&&
\rho(\bm{A},\bm{X}),
\end{aligned}
\label{P-rhoAX}
\end{equation}
where the minimum is taken over all consistent matrices $\bm{X}$, and $\rho$ denotes a suitable measure of approximation error.

Since the entries of any consistent matrix $\bm{X}$ is uniquely determined by the elements of a vector $\bm{x}$, problem \eqref{P-rhoAX} is equivalent to finding this vector. Considering that the vector $\bm{x}$ shows the overall individual ratings of alternatives, the evaluation of preferences is reduced to the solution of \eqref{P-rhoAX}.

Several approaches exist to solve problem \eqref{P-rhoAX}, including approximation with the principal eigenvector of the matrix $\bm{A}$ \cite{Saati1980Theanalytic,Saaty1984Comparison}, least squares approximation \cite{Saaty1984Comparison,Chu1998Ontheoptimal} and other techniques \cite{Barzilai1997Deriving,Farkas2003Consistency,Gonzalezpachon2003Transitive}. As a rule, these approaches offer algorithmic solutions using iterative numerical procedures, such as power iterations in the principal eigenvector method and the Newton algorithm in the least squares approximation.

Another approach based on tropical mathematics is proposed and examined in \cite{Elsner2004Maxalgebra,Elsner2010Maxalgebra,Tran2013Pairwise}. This approach uses approximation by a consistent matrix formed by a tropical eigenvector, and hence can be considered a tropical counterpart of the conventional principal eigenvector method. Moreover, it is shown in \cite{Elsner2010Maxalgebra} that the matrix, which solves the approximation problem, can be defined not only by tropical eigenvectors, but also by some other vectors. A technique to find these vectors is proposed, which offers a computational algorithm, rather than provides a direct solution in an explicit form. 

Below, we formulate the problem of finding an approximate consistent matrix as a problem of approximation by reciprocal matrices of rank $1$ in the topical mathematics sense. We show how results in tropical optimization can be applied to provide a complete direct solution, and give numerical examples.

\section{Preliminary Definitions and Remarks}
\label{S-PDR}

In this section, we outline preliminary definitions and results of tropical mathematics from \cite{Krivulin2014Aconstrained,Krivulin2014Tropical,Krivulin2015Amultidimensional,Krivulin2015Extremal} to provide an appropriate analytical framework for the solutions in the subsequent sections. Further details at both introductory and advanced levels can be found, for instance, in \cite{Kolokoltsov1997Idempotent,Golan2003Semirings,Heidergott2006Maxplus,Akian2007Maxplus,Litvinov2007Themaslov,Gondran2008Graphs,Speyer2009Tropical,Butkovic2010Maxlinear}.

\subsection{Idempotent Semifield}

Let $\mathbb{X}$ be a set with two distinct elements $\bm{0}$ and $\bm{1}$, called the zero and the unit, and two binary operations $\oplus$ and $\otimes$, called addition and multiplication, such that $(\mathbb{X},\oplus,\bm{0})$ is an idempotent commutative monoid, $(\mathbb{X},\otimes,\bm{1})$ is an Abelian group, multiplication distributes over addition, and the zero is absorbing for multiplication. Under these conditions, the system $(\mathbb{X},\oplus,\otimes,\bm{0},\bm{1})$ is referred to as the idempotent semifield.

In the semifield, addition is idempotent to have $x\oplus x=x$ for all $x\in\mathbb{X}$. Multiplication is invertible, which means that each nonzero $x\in\mathbb{X}$ has its inverse $x^{-1}$ such that $x\otimes x^{-1}=\bm{1}$. The integer powers represent iterated products as $x^{0}=\bm{1}$, $x^{p}=x^{p-1}x$, $x^{-p}=(x^{-1})^{p}$ for any $x\ne\bm{0}$ and integer $p>0$.

The semifield is assumed to have a linear order that is consistent with the partial order induced by idempotent addition to define $x\leq y$ if and only if $x\oplus y=y$. Moreover, the semifield is considered algebraically closed (radicable), which means that the equation $x^{p}=a$ has solutions for any $a\ne\bm{0}$ and integer $p$ to provide the powers with rational exponents.

In the expressions that follow, the multiplication sign is usually omitted for the sake of brevity. 

Examples of the idempotent semifield under study include
\begin{align*}
\mathbb{R}_{\max,\times}
&=
(\mathbb{R}_{+}\cup\{0\},\max,\times,0,1),
\\
\mathbb{R}_{\max,+}
&=
(\mathbb{R}\cup\{-\infty\},\max,+,-\infty,0),
\end{align*}
where $\mathbb{R}$ is the set of real numbers, and $\mathbb{R}_{+}=\{x\in\mathbb{R}|x>0\}$.

The semifield $\mathbb{R}_{\max,\times}$ has the addition $\oplus$ defined as maximum, and the multiplication $\otimes$ defined as usual. The neutral elements $\bm{0}$ and $\bm{1}$ coincide with the arithmetic zero and one. The power and inversion notation has the standard meaning. 

The semifield $\mathbb{R}_{\max,+}$ is equipped with $\oplus=\max$, $\otimes=+$, $\bm{0}=-\infty$ and $\bm{1}=0$. For each $x\in\mathbb{R}$, the inverse $x^{-1}$ coincides with the usual opposite number $-x$. For all $x,y\in\mathbb{R}$, the power $x^{y}$ corresponds to the regular arithmetic product $xy$.

In both semifields, the idempotent addition induces the order, which is consistent with the natural linear order on $\mathbb{R}$.

\subsection{Vector and Matrix Algebra}

The set of column vectors with $n$ elements over $\mathbb{X}$ is denoted $\mathbb{X}^{n}$. A vector with all elements equal to $\bm{0}$ is the zero vector. A vector is called regular if it has no zero elements. Vector addition and scalar multiplication follow the conventional element-wise rules, where the scalar operations $\oplus$ and $\otimes$ play the roles of the standard addition and multiplication.

A vector $\bm{b}$ is linearly dependent on vectors $\bm{a}_{1},\ldots,\bm{a}_{m}$ if $\bm{b}=x_{1}\bm{a}_{1}\oplus\cdots\oplus x_{m}\bm{a}_{m}$ for some scalars $x_{1},\ldots,x_{m}$.
Vectors $\bm{a}$ and $\bm{b}$ are collinear if $\bm{b}=x\bm{a}$ for some scalar $x$. The set of linear combinations $x_{1}\bm{a}_{1}\oplus\cdots\oplus x_{m}\bm{a}_{m}$ for all possible coefficients $x_{1},\ldots,x_{m}$ is closed under vector addition and scalar multiplication, and is referred to as the idempotent vector space generated by the vectors $\bm{a}_{1},\ldots,\bm{a}_{m}$.

For each nonzero column vector $\bm{x}=(x_{i})$, the multiplicative conjugate transpose is the row vector $\bm{x}^{-}=(x_{i}^{-})$ with the elements $x_{i}^{-}=x_{i}^{-1}$ if $x_{i}\ne\bm{0}$, and $x_{i}^{-}=\bm{0}$ otherwise.

The matrices with $m$ rows and $n$ columns form the set $\mathbb{X}^{m\times n}$. Matrix addition, matrix multiplication and scalar multiplication are routinely defined entry-wise, where the operations $\oplus$ and $\otimes$ are used instead of the usual addition and multiplication.

For any nonzero matrix $\bm{A}=(a_{ij})\in\mathbb{X}^{m\times n}$, the multiplicative conjugate transpose is the matrix $\bm{A}^{-}=(a_{ij}^{-})\in\mathbb{X}^{n\times m}$, where $a_{ij}^{-}=a_{ji}^{-1}$ if $a_{ji}\ne\bm{0}$, and $a_{ij}^{-}=\bm{0}$ otherwise.

The rank of a matrix is defined as the maximum number of linearly independent columns (rows) in the matrix. A matrix $\bm{A}$ has rank $1$ if and only if there exist nonzero column vectors $\bm{x}$ and $\bm{y}$ such that $\bm{A}=\bm{x}\bm{y}^{T}$. 

Consider square matrices of order $n$ in the set $\mathbb{X}^{n\times n}$. A matrix that has $\bm{1}$ along the diagonal, and $\bm{0}$ elsewhere, is the identity matrix denoted $\bm{I}$. The power notation is defined to indicate repeated multiplication as $\bm{A}^{0}=\bm{I}$ and $\bm{A}^{p}=\bm{A}^{p-1}\bm{A}$ for any square matrix $\bm{A}$ and integer $p>0$.

A matrix $\bm{A}$ without zero entries is called symmetrically reciprocal (or, simply, reciprocal) if the condition $\bm{A}^{-}=\bm{A}$ holds. A reciprocal matrix $\bm{A}$ is of unit rank if and only if $\bm{A}=\bm{x}\bm{x}^{-}$, where $\bm{x}$ is a regular column vector.

The trace of a matrix $\bm{A}=(a_{ij})$ is given by
$$
\mathop\mathrm{tr}\bm{A}
=
a_{11}\oplus\cdots\oplus a_{nn}.
$$
For any matrices $\bm{A}$ and $\bm{B}$, the following equalities hold:
$$
\mathop\mathrm{tr}(\bm{A}\oplus\bm{B})
=
\mathop\mathrm{tr}\bm{A}
\oplus
\mathop\mathrm{tr}\bm{B},
\qquad
\mathop\mathrm{tr}(\bm{A}\bm{B})
=
\mathop\mathrm{tr}(\bm{B}\bm{A}).
$$

To describe solutions of optimization problems below, a function is used that takes any matrix $\bm{A}$ to produce the scalar
$$
\mathop\mathrm{Tr}(\bm{A})
=
\mathop\mathrm{tr}\bm{A}
\oplus\cdots\oplus
\mathop\mathrm{tr}\bm{A}^{n}.
$$

Provided that $\mathop\mathrm{Tr}(\bm{A})\leq\bm{1}$, the star operator (also known as the Kleene star) maps $\bm{A}$ into the matrix
$$
\bm{A}^{\ast}
=
\bm{I}\oplus\bm{A}\oplus\cdots\oplus\bm{A}^{n-1}.
$$

\subsection{Distance Functions}

The distance between two regular vectors $\bm{x},\bm{y}\in\mathbb{X}^{n}$ is given by the function
$$
\rho(\bm{x},\bm{y})
=
\bm{y}^{-}\bm{x}
\oplus
\bm{x}^{-}\bm{y},
$$
which takes the minimum value $\bm{1}$ only when $\bm{y}=\bm{x}$. 

For the real semifield $\mathbb{R}_{\max,+}$, where $\bm{1}=0$, the function $\rho$ coincides with the Chebyshev metric
$$
\rho_{\infty}(\bm{x},\bm{y})
=
\max_{1\leq i\leq n}|x_{i}-y_{i}|.
$$

In the case of $\mathbb{R}_{\max,\times}$, the function $\rho$ differs from the usual metrics in the range of values, and becomes a log-Chebyshev metric after taking the logarithm. In the general case, the function $\rho$ is referred to as the Chebyshev-like distance.

The distance between two matrices without zero entries is defined by the Chebyshev-like distance function
\begin{equation}
\rho(\bm{A},\bm{B})
=
\mathop\mathrm{tr}(\bm{B}^{-}\bm{A})
\oplus
\mathop\mathrm{tr}(\bm{A}^{-}\bm{B}).
\label{E-rhoAB}
\end{equation}

This function has the form of the Chebyshev metric for the semifield $\mathbb{R}_{\max,+}$, and takes the form of a log-Chebyshev metric after logarithmic transformation for $\mathbb{R}_{\max,\times}$.

\subsection{Eigenvalues and Eigenvectors of Matrices}

A scalar $\lambda\in\mathbb{X}$ is an eigenvalue of a matrix $\bm{A}\in\mathbb{X}^{n\times n}$ if there exists a nonzero vector $\bm{x}\in\mathbb{X}^{n}$ such that $\bm{A}\bm{x}=\lambda\bm{x}$. This vector $\bm{x}$ is an eigenvector of $\bm{A}$, corresponding to $\lambda$.

The maximum eigenvalue of a matrix $\bm{A}=(a_{ij})$ is referred to as the spectral radius of the matrix and calculated as
$$
\lambda
=
\mathop\mathrm{tr}\nolimits\bm{A}\oplus\cdots\oplus\mathop\mathrm{tr}\nolimits^{1/n}(\bm{A}^{n}),
$$
or, in terms of matrix entries, as
\begin{equation}
\lambda
=
\bigoplus_{k=1}^{n}\bigoplus_{1\leq i_{1},\ldots,i_{k}\leq n}(a_{i_{1}i_{2}}a_{i_{2}i_{3}}\cdots a_{i_{k}i_{1}})^{1/k}.
\label{E-lambda-ai1i2ai2i3aiki1}
\end{equation}

Any matrix $\bm{A}$ with nonzero entries has only one eigenvalue $\lambda$ given by the above expressions. The eigenvectors of $\bm{A}$, which correspond to $\lambda$, are derived as follows. For the matrix $\bm{A}_{\lambda}=\lambda^{-1}\bm{A}$, calculate the matrix (the Kleene star)
$$
\bm{A}_{\lambda}^{\ast}
=
\bm{I}\oplus\bm{A}_{\lambda}\oplus\cdots\oplus\bm{A}_{\lambda}^{n-1},
$$
and then the matrix $\bm{A}_{\lambda}^{\times}=\bm{A}_{\lambda}\bm{A}_{\lambda}^{\ast}$. It remains to form the matrix $\bm{A}_{\lambda}^{+}$ by taking those columns that coincide in both matrices $\bm{A}_{\lambda}^{\ast}$ and $\bm{A}_{\lambda}^{\times}$. All eigenvectors of $\bm{A}$ are given by
$$
\bm{x}
=
\bm{A}_{\lambda}^{+}\bm{u},
$$
where $\bm{u}$ is any vector of appropriate size, and hence constitute an idempotent vector space generated by the columns in $\bm{A}_{\lambda}^{+}$.

\section{Tropical Optimization Problems}
\label{S-TOP}

We now consider optimization problems that are formulated and solved in the tropical mathematics setting. We start with the unconstrained problem: given a matrix $\bm{A}\in\mathbb{X}^{n\times n}$, find regular vectors $\bm{x}\in\mathbb{X}^{n}$ that
\begin{equation}
\begin{aligned}
&
\text{minimize}
&&
\bm{x}^{-}\bm{A}\bm{x},
\end{aligned}
\label{P-minxAx}
\end{equation}

A complete, direct solution to this problem is obtained in \cite{Krivulin2014Aconstrained,Krivulin2015Amultidimensional,Krivulin2015Extremal} in the following form.
\begin{lemma}
\label{L-minxAx}
Let $\bm{A}$ be a matrix with spectral radius $\lambda>\bm{0}$, and $\bm{A}_{\lambda}=\lambda^{-1}\bm{A}$. Then, the minimum value in problem \eqref{P-minxAx} is equal to $\lambda$, and all regular solutions are given by
$$
\bm{x}
=
\bm{A}_{\lambda}^{\ast}\bm{u},
\qquad
\bm{u}\in\mathbb{X}^{n}.
$$
\end{lemma}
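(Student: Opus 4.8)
The plan is to prove the two assertions --- that the minimum equals $\lambda$ and that the regular minimizers are exactly the vectors $\bm{A}_\lambda^\ast\bm{u}$ --- by a lower-bound argument followed by an explicit verification that each candidate attains the bound.

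First I would establish the lower bound $\bm{x}^-\bm{A}\bm{x}\geq\lambda$ for every regular $\bm{x}$. The natural route is to note that the scalar $\mu=\bm{x}^-\bm{A}\bm{x}$ satisfies $\bm{A}\bm{x}\leq\mu\bm{x}$ (since for each $i$ one has $(\bm{A}\bm{x})_i x_i^{-1}\leq\bigoplus_j x_j^{-1}(\bm{A}\bm{x})_j=\mu$, using regularity of $\bm{x}$ so that $\bm{x}^-\bm{x}=\bm{1}$ componentwise in the needed sense). Iterating $\bm{A}\bm{x}\leq\mu\bm{x}$ gives $\bm{A}^k\bm{x}\leq\mu^k\bm{x}$, hence $\operatorname{tr}(\bm{A}^k)\leq\mu^k$ after contracting against $\bm{x}^-$ (more precisely, $a_{i_1i_2}\cdots a_{i_ki_1}\leq\mu^k$ for every cyclic product, by telescoping the inequalities $\bm{A}\bm{x}\leq\mu\bm{x}$ along the cycle), and therefore by the cycle-mean formula \eqref{E-lambda-ai1i2ai2i3aiki1} we get $\lambda\leq\mu$. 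This yields $\bm{x}^-\bm{A}\bm{x}\geq\lambda$ for all regular $\bm{x}$.

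Next I would show the bound is attained precisely on the claimed set. Since $\lambda>\bm{0}$ is the unique eigenvalue of $\bm{A}$, the normalized matrix $\bm{A}_\lambda=\lambda^{-1}\bm{A}$ has spectral radius $\bm{1}$, so $\operatorname{Tr}(\bm{A}_\lambda)\leq\bm{1}$ and the Kleene star $\bm{A}_\lambda^\ast$ is defined. For any $\bm{u}\in\mathbb{X}^n$ put $\bm{x}=\bm{A}_\lambda^\ast\bm{u}$. The key algebraic facts are $\bm{A}_\lambda\bm{A}_\lambda^\ast\leq\bm{A}_\lambda^\ast$ (because $\bm{A}_\lambda^\ast=\bm{I}\oplus\bm{A}_\lambda\bm{A}_\lambda^\ast$ when the spectral radius is at most $\bm{1}$, so in particular $\bm{A}_\lambda\bm{A}_\lambda^\ast\leq\bm{A}_\lambda^\ast$), which gives $\bm{A}\bm{x}=\lambda\bm{A}_\lambda\bm{A}_\lambda^\ast\bm{u}\leq\lambda\bm{A}_\lambda^\ast\bm{u}=\lambda\bm{x}$, hence $\bm{x}^-\bm{A}\bm{x}\leq\lambda$; combined with the lower bound this forces $\bm{x}^-\bm{A}\bm{x}=\lambda$, and one checks $\bm{x}$ is regular (the diagonal $\bm{I}$ term in $\bm{A}_\lambda^\ast$ keeps $\bm{x}\geq\bm{u}$ away from $\bm{0}$ whenever $\bm{u}$ is, and a scaling argument covers general $\bm{u}$). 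Conversely, if a regular $\bm{x}$ attains the minimum, then $\mu=\bm{x}^-\bm{A}\bm{x}=\lambda$ and the inequality $\bm{A}\bm{x}\leq\lambda\bm{x}$ from the first paragraph reads $\bm{A}_\lambda\bm{x}\leq\bm{x}$; iterating and adding gives $\bm{A}_\lambda^\ast\bm{x}\leq\bm{x}$, while $\bm{A}_\lambda^\ast\geq\bm{I}$ gives $\bm{A}_\lambda^\ast\bm{x}\geq\bm{x}$, so $\bm{x}=\bm{A}_\lambda^\ast\bm{x}$ and thus $\bm{x}=\bm{A}_\lambda^\ast\bm{u}$ with $\bm{u}=\bm{x}$. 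This closes the characterization.

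The main obstacle is the lower-bound step: making rigorous the passage from $\bm{x}^-\bm{A}\bm{x}=\mu$ to the cycle inequalities $a_{i_1i_2}\cdots a_{i_ki_1}\leq\mu^k$, which is where regularity of $\bm{x}$ is essential (so that $x_i^{-1}x_i=\bm{1}$ and the telescoping along a cycle is exact rather than lossy). Once that inequality is in hand, formula \eqref{E-lambda-ai1i2ai2i3aiki1} delivers $\lambda\leq\mu$ immediately, and the rest is the routine Kleene-star manipulation sketched above. I would also take care that the ``converse'' direction uses only $\operatorname{Tr}(\bm{A}_\lambda)\leq\bm{1}$ (equivalently, finiteness of $\bm{A}_\lambda^\ast$), which is guaranteed by $\lambda$ being the spectral radius.
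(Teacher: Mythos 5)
The paper states this lemma without proof, citing it to external references, so there is no in-paper argument to compare against; your proof is the standard one used in those sources (lower bound $\bm{x}^{-}\bm{A}\bm{x}\geq\lambda$ via telescoping the cycle products, attainment and the full characterization of minimizers via the Kleene star identity $\bm{A}_{\lambda}\bm{A}_{\lambda}^{\ast}\leq\bm{A}_{\lambda}^{\ast}$ and the fixed-point equation $\bm{x}=\bm{A}_{\lambda}^{\ast}\bm{x}$), and it is correct. The only step worth writing out in full is the fact $\bm{A}_{\lambda}^{n}\leq\bm{A}_{\lambda}^{\ast}$ behind your "key algebraic fact," which follows from $\mathop{\mathrm{Tr}}(\bm{A}_{\lambda})\leq\bm{1}$ by deleting a cycle of weight at most $\bm{1}$ from any path of length $n$.
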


It follows from Lemma~\ref{L-minxAx} that the solutions form an idempotent vector space spanned by the columns of $\bm{A}_{\lambda}^{\ast}$.

Furthermore, suppose that, given matrices $\bm{A},\bm{C}\in\mathbb{X}^{n\times n}$, we need to find regular solutions $\bm{x}\in\mathbb{X}^{n}$ to the problem
\begin{equation}
\begin{aligned}
&
\text{minimize}
&&
\bm{x}^{-}\bm{A}\bm{x},
\\
&
\text{subject to}
&&
\bm{C}\bm{x}
\leq
\bm{x}.
\end{aligned}
\label{P-xAxCxx}
\end{equation}

The next complete solution to the problem is given in \cite{Krivulin2015Amultidimensional}.

\begin{theorem}
\label{T-xAxCxx}
Let $\bm{A}$ be a matrix with spectral radius $\lambda>\bm{0}$, and $\bm{C}$ a matrix such that $\mathop\mathrm{Tr}(\bm{C})\leq\bm{1}$. Then, the minimum value in problem \eqref{P-xAxCxx} is equal to
\begin{equation*}
\theta
=
\lambda
\oplus
\bigoplus_{k=1}^{n-1}\mathop{\bigoplus\hspace{1.1em}}_{1\leq i_{1}+\cdots+i_{k}\leq n-k}\mathop\mathrm{tr}\nolimits^{1/k}(\bm{A}\bm{C}^{i_{1}}\cdots\bm{A}\bm{C}^{i_{k}}),
\end{equation*}
and all regular solutions are given by
\begin{equation*}
\bm{x}
=
(\theta^{-1}\bm{A}\oplus\bm{C})^{\ast}\bm{u},
\qquad
\bm{u}\in\mathbb{X}^{n}.
\end{equation*}
\end{theorem}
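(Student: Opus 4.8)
The plan is to prove the theorem in two stages: first show that $\theta$ is a lower bound for the objective $\bm{x}^{-}\bm{A}\bm{x}$ over all regular feasible $\bm{x}$, and then exhibit a parametric family of feasible vectors on which the value $\theta$ is attained and argue that this family is exactly the set of regular optimal solutions.

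For the lower bound, fix a regular feasible $\bm{x}$ and put $\mu=\bm{x}^{-}\bm{A}\bm{x}$; since the unconstrained minimum is $\lambda>\bm{0}$ by Lemma~\ref{L-minxAx}, we have $\mu\geq\lambda>\bm{0}$, so $\mu^{-1}$ is defined. I would use the elementary equivalence, valid for regular $\bm{x}$, that $\bm{x}^{-}\bm{A}\bm{x}\leq\mu$ holds if and only if $\bm{A}\bm{x}\leq\mu\bm{x}$, i.e. $\mu^{-1}\bm{A}\bm{x}\leq\bm{x}$. Combining this with the constraint $\bm{C}\bm{x}\leq\bm{x}$ gives $\bm{B}_{\mu}\bm{x}\leq\bm{x}$ for $\bm{B}_{\mu}=\mu^{-1}\bm{A}\oplus\bm{C}$; iterating and reading off diagonal entries (here regularity of $\bm{x}$ is used) yields $\mathop\mathrm{tr}\bm{B}_{\mu}^{m}\leq\bm{1}$ for every $m$, hence $\mathop\mathrm{Tr}(\bm{B}_{\mu})\leq\bm{1}$.

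The crux is then to expand $\mathop\mathrm{Tr}(\bm{B}_{\mu})$. Writing each power $\bm{B}_{\mu}^{m}$ as a sum over words in the two letters $\mu^{-1}\bm{A}$ and $\bm{C}$, applying the trace, and using its cyclic invariance to bring every word with $k\geq1$ occurrences of $\bm{A}$ into the normal form $\mu^{-k}\bm{A}\bm{C}^{i_{1}}\cdots\bm{A}\bm{C}^{i_{k}}$ with $i_{1}+\cdots+i_{k}=m-k$, one finds that $\mathop\mathrm{Tr}(\bm{B}_{\mu})\leq\bm{1}$ is equivalent to $\mathop\mathrm{tr}\bm{C}^{m}\leq\bm{1}$ (which is assumed) together with $\mathop\mathrm{tr}\nolimits^{1/k}(\bm{A}\bm{C}^{i_{1}}\cdots\bm{A}\bm{C}^{i_{k}})\leq\mu$ over all admissible indices. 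Isolating the terms with $i_{1}=\cdots=i_{k}=0$, which reassemble into $\lambda$ via the spectral-radius formula \eqref{E-lambda-ai1i2ai2i3aiki1}, from those with $i_{1}+\cdots+i_{k}\geq1$, where the length bound $m\leq n$ forces precisely $1\leq k\leq n-1$ and $1\leq i_{1}+\cdots+i_{k}\leq n-k$, shows that $\mathop\mathrm{Tr}(\bm{B}_{\mu})\leq\bm{1}$ is equivalent to $\theta\leq\mu$. Consequently $\theta\leq\bm{x}^{-}\bm{A}\bm{x}$ for every regular feasible $\bm{x}$. This combinatorial bookkeeping — matching the index sets exactly to the stated formula and tracking the length constraint — is the step I expect to be the main obstacle; the rest is routine.

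For attainment and the full description of solutions, apply the equivalence above with $\mu=\theta$: trivially $\theta\leq\theta$, so $\mathop\mathrm{Tr}(\bm{B}_{\theta})\leq\bm{1}$ for $\bm{B}_{\theta}=\theta^{-1}\bm{A}\oplus\bm{C}$, whence $\bm{B}_{\theta}^{\ast}$ is well defined and $\bm{B}_{\theta}\bm{B}_{\theta}^{\ast}\leq\bm{B}_{\theta}^{\ast}$. Then for any $\bm{u}$ the vector $\bm{x}=\bm{B}_{\theta}^{\ast}\bm{u}$ satisfies $\bm{B}_{\theta}\bm{x}\leq\bm{x}$, which splits into $\bm{C}\bm{x}\leq\bm{x}$ (feasibility) and $\theta^{-1}\bm{A}\bm{x}\leq\bm{x}$, i.e. $\bm{x}^{-}\bm{A}\bm{x}\leq\theta$; with the lower bound this forces $\bm{x}^{-}\bm{A}\bm{x}=\theta$, so the minimum equals $\theta$ and is attained. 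Conversely, if a regular $\bm{x}$ is feasible and optimal, then $\theta^{-1}\bm{A}\bm{x}\leq\bm{x}$ and $\bm{C}\bm{x}\leq\bm{x}$ give $\bm{B}_{\theta}\bm{x}\leq\bm{x}$, hence $\bm{B}_{\theta}^{\ast}\bm{x}\leq\bm{x}$; since $\bm{B}_{\theta}^{\ast}\geq\bm{I}$ yields the reverse inequality, we conclude $\bm{x}=\bm{B}_{\theta}^{\ast}\bm{x}$, so $\bm{x}$ belongs to the claimed family with $\bm{u}=\bm{x}$. This establishes both the optimal value and the set of all regular solutions.
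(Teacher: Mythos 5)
The paper does not actually prove Theorem~\ref{T-xAxCxx}: it is imported verbatim from \cite{Krivulin2015Amultidimensional}, so there is no in-paper argument to compare against. Your reconstruction is correct and follows the standard route used in that reference: the equivalence $\bm{x}^{-}\bm{A}\bm{x}\leq\mu\Leftrightarrow\mu^{-1}\bm{A}\bm{x}\leq\bm{x}$ for regular $\bm{x}$, the reduction of feasibility-plus-level-set to $(\mu^{-1}\bm{A}\oplus\bm{C})\bm{x}\leq\bm{x}$, the trace/cyclic-word expansion of $\mathop\mathrm{Tr}(\mu^{-1}\bm{A}\oplus\bm{C})$ to show this is solvable in regular $\bm{x}$ exactly when $\mu\geq\theta$, and the Kleene-star description of the solution set. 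Your index bookkeeping is right: words with $k$ letters $\bm{A}$ and total length $m\leq n$ give $i_{1}+\cdots+i_{k}=m-k$, the all-zero exponent words reassemble into $\lambda$ via \eqref{E-lambda-ai1i2ai2i3aiki1}, the remaining ones force $1\leq k\leq n-1$ and $1\leq i_{1}+\cdots+i_{k}\leq n-k$, and the pure-$\bm{C}$ words are absorbed by the hypothesis $\mathop\mathrm{Tr}(\bm{C})\leq\bm{1}$. The only step you assert without justification is that $\mathop\mathrm{Tr}(\bm{B}_{\theta})\leq\bm{1}$ implies $\bm{B}_{\theta}\bm{B}_{\theta}^{\ast}\leq\bm{B}_{\theta}^{\ast}$, equivalently $\bm{B}_{\theta}^{n}\leq\bm{I}\oplus\cdots\oplus\bm{B}_{\theta}^{n-1}$; this is the standard path-shortening (Carr\'e) lemma and deserves at least a citation, but it is not a gap in substance. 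Likewise, note that not every $\bm{u}$ yields a regular $\bm{x}$ (one must restrict to those $\bm{u}$ for which $\bm{B}_{\theta}^{\ast}\bm{u}$ is regular), but this caveat is already implicit in the theorem statement itself.
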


In the ensuing section, the above solutions are applied to solve matrix approximation problems, which appear in rating alternatives on the basis of their pairwise comparisons.

\section{Evaluation of Scores by Pairwise Comparisons}
\label{S-ESPC}

We are now in a position to put the problem of rating alternatives via approximation by consistent matrices in the context of tropical optimization. First, we note that, in the framework of tropical mathematics, both multiplicative and additive consistent matrices can be represented in a common form of the reciprocal matrix of rank $1$, which are given by
$$
\bm{X}
=
\bm{x}\bm{x}^{-},
$$
to be interpreted in terms of either the semifield $\mathbb{R}_{\max,\times}$ for the multiplicative case and the semifield $\mathbb{R}_{\max,+}$ for the additive.

The problem of finding an approximate consistent matrix $\bm{X}$, or, equivalently, a vector of scores $\bm{x}$, takes the form: given a matrix $\bm{A}\in\mathbb{X}^{n\times n}$, find regular vectors $\bm{x}\in\mathbb{X}^{n}$ that
\begin{equation}
\begin{aligned}
&
\text{minimize}
&&
\rho(\bm{A},\bm{x}\bm{x}^{-}),
\end{aligned}
\label{P-rhoAxx}
\end{equation}
where $\rho$ is a measure of approximation error, which is given by the Chebyshev-like distance function defined as \eqref{E-rhoAB}. The function $\rho$ becomes a log-Chebyshev metric for the  multiplicative scale, and the Chebyshev metric for the additive.

In this section, we apply the solutions of tropical optimization problems given by Lemma~\ref{L-minxAx} and Theorem~\ref{T-xAxCxx} to approximation problem \eqref{P-rhoAxx} to evaluate, under various assumptions, the scores of alternatives, based on pairwise comparison matrices. The results obtained offer complete, direct solutions given in compact vector form, which extend the known solutions to the problems in \cite{Elsner2004Maxalgebra,Elsner2010Maxalgebra}, and are easier to calculate.  

Below, we examine problems of rating alternatives on a multiplicative scale. Considering that, in terms of tropical mathematics, the solution to approximation problems in both multiplicative and additive settings has a common general form, the case of additive scale is not covered here for brevity.

\subsection{Evaluation of Scores Given by One Matrix}

We first provide a solution for evaluating the vector of scores $\bm{x}\in\mathbb{R}_{+}^{n}$ on the basis of one pairwise comparison matrix $\bm{A}\in\mathbb{R}_{+}^{n\times n}$. The problem is described in the setting of the semifield $\mathbb{R}_{\max,\times}$ in the form of \eqref{P-rhoAxx} to approximate the matrix $\bm{A}$ by a reciprocal matrix of unit rank.
\begin{theorem}
\label{T-minrhoAxx}
Let $\bm{A}$ be a matrix such that the matrix $\bm{B}=\bm{A}\oplus\bm{A}^{-}$ has no zero entries, $\mu$ be the spectral radius of $\bm{B}$, and $\bm{B}_{\mu}=\mu^{-1}\bm{B}$. Then the minimum value in problem \eqref{P-rhoAxx} is equal to $\mu$, and all solutions are given by
$$
\bm{x}
=
\bm{B}_{\mu}^{\ast}\bm{u},
\qquad
\bm{u}
\in
\mathbb{R}_{+}^{n}.
$$
\end{theorem}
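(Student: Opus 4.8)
The plan is to reduce the matrix approximation problem \eqref{P-rhoAxx} to the unconstrained scalar minimization problem \eqref{P-minxAx} of Lemma~\ref{L-minxAx}, taken with the matrix $\bm{B}=\bm{A}\oplus\bm{A}^{-}$ in place of $\bm{A}$. All of the content lies in an algebraic simplification of the objective function $\rho(\bm{A},\bm{x}\bm{x}^{-})$.

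First I would note that, for any regular $\bm{x}$, the matrix $\bm{X}=\bm{x}\bm{x}^{-}$ is reciprocal of unit rank, so $\bm{X}^{-}=\bm{X}$. Hence, by \eqref{E-rhoAB}, the distance between $\bm{A}$ and $\bm{X}$ is
$$
\rho(\bm{A},\bm{x}\bm{x}^{-})
=
\mathop\mathrm{tr}(\bm{x}\bm{x}^{-}\bm{A})
\oplus
\mathop\mathrm{tr}(\bm{A}^{-}\bm{x}\bm{x}^{-}).
$$
Applying the cyclic property $\mathop\mathrm{tr}(\bm{U}\bm{V})=\mathop\mathrm{tr}(\bm{V}\bm{U})$ and using that $\bm{x}^{-}\bm{A}\bm{x}$ and $\bm{x}^{-}\bm{A}^{-}\bm{x}$ are scalars, hence equal to their own traces, I would rewrite this as
$$
\rho(\bm{A},\bm{x}\bm{x}^{-})
=
\bm{x}^{-}\bm{A}\bm{x}\oplus\bm{x}^{-}\bm{A}^{-}\bm{x}
=
\bm{x}^{-}(\bm{A}\oplus\bm{A}^{-})\bm{x}
=
\bm{x}^{-}\bm{B}\bm{x}.
$$
Thus problem \eqref{P-rhoAxx} is exactly problem \eqref{P-minxAx} written for the matrix $\bm{B}$.

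It then remains to verify the hypothesis of Lemma~\ref{L-minxAx} for $\bm{B}$, namely that its spectral radius $\mu$ is positive. This follows at once from the standing assumption that $\bm{B}$ has no zero entries: its diagonal entries $b_{ii}=a_{ii}\oplus a_{ii}^{-1}$ are then nonzero (in fact $b_{ii}\geq\bm{1}$), so $\mathop\mathrm{tr}\bm{B}>\bm{0}$, and the spectral radius formula \eqref{E-lambda-ai1i2ai2i3aiki1} yields $\mu\geq\mathop\mathrm{tr}\bm{B}>\bm{0}$. Lemma~\ref{L-minxAx} applied to $\bm{B}$ now shows that the minimum of $\bm{x}^{-}\bm{B}\bm{x}$, and hence of $\rho(\bm{A},\bm{x}\bm{x}^{-})$, over all regular $\bm{x}$ equals $\mu$ and is attained precisely on $\bm{x}=\bm{B}_{\mu}^{\ast}\bm{u}$ with $\bm{u}\in\mathbb{R}_{+}^{n}$; here $\bm{B}_{\mu}^{\ast}$ is well defined because $\mu$ being the spectral radius of $\bm{B}$ forces $\mathop\mathrm{Tr}(\bm{B}_{\mu})\leq\bm{1}$. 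This is the assertion of the theorem.

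I do not anticipate a genuine obstacle here: once the identity $\rho(\bm{A},\bm{x}\bm{x}^{-})=\bm{x}^{-}\bm{B}\bm{x}$ is in place, the theorem is a direct specialization of Lemma~\ref{L-minxAx}. The only delicate point is confirming $\mu>\bm{0}$, which is exactly where the hypothesis on $\bm{B}=\bm{A}\oplus\bm{A}^{-}$ is used.
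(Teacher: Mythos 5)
Your proposal is correct and follows essentially the same route as the paper: rewrite $\rho(\bm{A},\bm{x}\bm{x}^{-})$ via \eqref{E-rhoAB}, the identity $(\bm{x}\bm{x}^{-})^{-}=\bm{x}\bm{x}^{-}$ and the trace properties to obtain $\bm{x}^{-}\bm{B}\bm{x}$, then invoke Lemma~\ref{L-minxAx}. Your explicit check that $\mu>\bm{0}$ via the diagonal entries of $\bm{B}$ merely spells out what the paper dismisses as ``easy to see.''
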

\begin{proof}
It is easy to see that, since all entries in the matrix $\bm{B}$ are nonzero, this matrix has the spectral radius $\mu>\bm{0}$. 

We use formula \eqref{E-rhoAB}, the equality $(\bm{x}\bm{x}^{-})^{-}=\bm{x}\bm{x}^{-}$, and properties of the trace to write the objective function as
\begin{multline*}
\rho(\bm{A},\bm{x}\bm{x}^{-})
=
\mathop\mathrm{tr}((\bm{x}\bm{x}^{-})^{-}\bm{A})
\oplus
\mathop\mathrm{tr}(\bm{A}^{-}\bm{x}\bm{x}^{-})
\\
=
\bm{x}^{-}\bm{A}\bm{x}
\oplus
\bm{x}^{-}\bm{A}^{-}\bm{x}
=
\bm{x}^{-}\bm{B}\bm{x}.
\end{multline*}

An application of Lemma~\ref{L-minxAx} completes the proof.
\end{proof}

We now give an example of evaluating the score vector from a reciprocal matrix of pairwise comparisons. For arbitrary positive matrices, evaluation of scores follows the same way.

\begin{example}
\label{E-Areciprocal}
Consider the reciprocal matrix defined as
$$
\bm{A}
=
\left(
\begin{array}{cccc}
1 & 3 & 2 & 4
\\
1/3 & 1 & 1/3 & 1/2
\\
1/2 & 3 & 1 & 1/4
\\
1/4 & 2 & 4 & 1
\end{array}
\right).
$$

To approximate the matrix by a reciprocal matrix of unit rank, and thus to find a score vector $\bm{x}$, we apply Theorem~\ref{T-minrhoAxx}. Since the matrix $\bm{A}$ is reciprocal, and hence $\bm{A}^{-}=\bm{A}$, we see that $\bm{B}=\bm{A}\oplus\bm{A}^{-}=\bm{A}$. We apply \eqref{E-lambda-ai1i2ai2i3aiki1} to find the spectral radius of the matrix $\bm{B}$ to be $\mu=2$.

Furthermore, we take the matrix
$$
\bm{B}_{\mu}
=
\mu^{-1}\bm{B}
=
\left(
\begin{array}{cccc}
1/2 & 3/2 & 1 & 2
\\
1/6 & 1/2 & 1/6 & 1/4
\\
1/4 & 3/2 & 1/2 & 1/8
\\
1/8 & 1 & 2 & 1/2
\end{array}
\right),
$$
and then calculate the powers
\begin{align*}
\bm{B}_{\mu}^{2}
&=
\left(
\begin{array}{cccc}
1/4 & 2 & 4 & 1
\\
1/12 & 1/4 & 1/2 & 1/3
\\
1/4 & 3/4 & 1/4 & 1/2
\\
1/2 & 3 & 1 & 1/4
\end{array}
\right),
\\
\bm{B}_{\mu}^{3}
&=
\left(
\begin{array}{cccc}
1 & 6 & 2 & 1/2
\\
1/8 & 3/4 & 2/3 & 1/6
\\
1/8 & 1/2 & 1 & 1/2
\\
1/2 & 3/2 & 1/2 & 1
\end{array}
\right).
\end{align*}

Finally, we compose the matrix
$$
\bm{B}_{\mu}^{\ast}
=
\bm{I}\oplus\bm{B}_{\mu}\oplus\bm{B}_{\mu}^{2}\oplus\bm{B}_{\mu}^{3}
=
\left(
\begin{array}{cccc}
1 & 6 & 4 & 2
\\
1/6 & 1 & 2/3 & 1/3
\\
1/4 & 3/2 & 1 & 1/2
\\
1/2 & 3 & 2 & 1
\end{array}
\right).
$$

Clearly, the columns in the matrix $\bm{B}_{\mu}^{\ast}$ are collinear to each other. Specifically, the last three columns can be obtained by multiplying the first one by $6$, $4$ and $2$, respectively. Since each column generates exactly the same vector space, it is sufficient to take only one of them to describe all solution vectors $\bm{x}$. We use the first column and write the score vector as
$$
\bm{x}
=
\left(
\begin{array}{c}
1
\\
1/6
\\
1/4
\\
1/2
\end{array}
\right)u,
$$
where $u$ is an arbitrary positive number to be fixed in accordance with the required form or interpretation of the result.

With $u=1$, the vector $\bm{x}=(1,1/6,1/4,1/2)^{T}$ shows that the first alternative is of the highest score $x_{1}=1$, followed by the fourth and third with scores $x_{4}=1/2$ and $x_{3}=1/4$. The second alternative has the lowest score $x_{2}=1/6$. If the scores are considered as weights, which must add up to one, we put $u=1/(1+1/6+1/4+1/2)=12/23$. The vector takes the form $\bm{x}=(12/23,2/23,3/23,6/23 )^{T}$.  
\end{example}

\subsection{Evaluation of Scores Given by Several Matrices}

Suppose that there are $m$ matrices $\bm{A}_{1},\ldots,\bm{A}_{m}\in\mathbb{X}^{n\times n}$, and we need to determine a reciprocal matrix of rank $1$ that approximates these matrices simultaneously. The approximation problem is defined in terms of the semifield $\mathbb{R}_{\max,\times}$ in a similar form as \eqref{P-rhoAxx} to find regular vectors $\bm{x}$ that
\begin{equation}
\begin{aligned}
&
\text{minimize}
&&
\max_{1\leq i\leq m}\rho(\bm{A}_{i},\bm{x}\bm{x}^{-}).
\end{aligned}
\label{P-rhomaxAixx}
\end{equation}

\begin{theorem}
\label{T-rhomaxAixx}
Let $\bm{A}_{i}$ be matrices for all $i=1,\ldots,m$ such that the matrix $\bm{B}=\bm{A}_{1}\oplus\bm{A}_{1}^{-}\oplus\cdots\oplus\bm{A}_{m}\oplus\bm{A}_{m}^{-}$ has no zero entries, $\mu$ be the spectral radius of $\bm{B}$, and $\bm{B}_{\mu}=\mu^{-1}\bm{B}$. Then, the minimum value in problem \eqref{P-rhomaxAixx} is equal to $\mu$, and all solutions are given by
$$
\bm{x}
=
\bm{B}_{\mu}^{\ast}\bm{u},
\qquad
\bm{u}
\in
\mathbb{R}_{+}^{n}.
$$
\end{theorem}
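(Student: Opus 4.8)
The plan is to reduce problem \eqref{P-rhomaxAixx} to the unconstrained problem \eqref{P-minxAx} with the single matrix $\bm{B}$, exactly as was done in the proof of Theorem~\ref{T-minrhoAxx} for one matrix, and then invoke Lemma~\ref{L-minxAx}. First I would observe that, since $\bm{B}=\bm{A}_{1}\oplus\bm{A}_{1}^{-}\oplus\cdots\oplus\bm{A}_{m}\oplus\bm{A}_{m}^{-}$ has no zero entries, every cyclic product $b_{i_{1}i_{2}}\cdots b_{i_{k}i_{1}}$ appearing in \eqref{E-lambda-ai1i2ai2i3aiki1} is nonzero, so the spectral radius $\mu$ of $\bm{B}$ satisfies $\mu>\bm{0}$; this is what is needed to apply the lemma.

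The core computation is to rewrite the objective function. For each $i$, using formula \eqref{E-rhoAB}, the identity $(\bm{x}\bm{x}^{-})^{-}=\bm{x}\bm{x}^{-}$, and the cyclic property of the trace together with the fact that $\bm{x}^{-}\bm{A}_{i}\bm{x}$ and $\bm{x}^{-}\bm{A}_{i}^{-}\bm{x}$ are scalars (so each equals its own trace), I would obtain
\begin{equation*}
\rho(\bm{A}_{i},\bm{x}\bm{x}^{-})
=
\mathop\mathrm{tr}((\bm{x}\bm{x}^{-})^{-}\bm{A}_{i})
\oplus
\mathop\mathrm{tr}(\bm{A}_{i}^{-}\bm{x}\bm{x}^{-})
=
\bm{x}^{-}\bm{A}_{i}\bm{x}
\oplus
\bm{x}^{-}\bm{A}_{i}^{-}\bm{x}.
\end{equation*}
Then I would use the fact that in the semifield $\mathbb{R}_{\max,\times}$ the operation $\max_{1\leq i\leq m}$ is precisely the idempotent sum $\bigoplus_{i=1}^{m}$, and that the map $\bm{M}\mapsto\bm{x}^{-}\bm{M}\bm{x}$ is linear (additive) in its matrix argument. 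Hence
\begin{equation*}
\max_{1\leq i\leq m}\rho(\bm{A}_{i},\bm{x}\bm{x}^{-})
=
\bigoplus_{i=1}^{m}\left(\bm{x}^{-}\bm{A}_{i}\bm{x}\oplus\bm{x}^{-}\bm{A}_{i}^{-}\bm{x}\right)
=
\bm{x}^{-}\left(\bigoplus_{i=1}^{m}\left(\bm{A}_{i}\oplus\bm{A}_{i}^{-}\right)\right)\bm{x}
=
\bm{x}^{-}\bm{B}\bm{x}.
\end{equation*}
This shows \eqref{P-rhomaxAixx} is an instance of \eqref{P-minxAx} with the matrix $\bm{B}$.

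Finally, applying Lemma~\ref{L-minxAx} to $\bm{B}$, whose spectral radius is $\mu>\bm{0}$, yields that the minimum value is $\mu$ and that all regular solutions are $\bm{x}=\bm{B}_{\mu}^{\ast}\bm{u}$ with $\bm{u}\in\mathbb{R}_{+}^{n}$, which is the claim. I do not expect a genuine obstacle here: the only point requiring care is the interchange of the maximum over matrices with the quadratic form, and this is exactly the observation that $\max$ is the semiring addition of $\mathbb{R}_{\max,\times}$ and that $\bm{x}^{-}(\cdot)\bm{x}$ distributes over it; everything else is a routine repetition of the single-matrix argument.
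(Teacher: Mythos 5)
Your proposal is correct and follows essentially the same route as the paper: rewrite each term $\rho(\bm{A}_{i},\bm{x}\bm{x}^{-})$ as $\bm{x}^{-}(\bm{A}_{i}\oplus\bm{A}_{i}^{-})\bm{x}$ exactly as in the single-matrix case, identify the outer maximum with the idempotent sum so the objective collapses to $\bm{x}^{-}\bm{B}\bm{x}$, and invoke Lemma~\ref{L-minxAx}. The only difference is that you spell out the trace manipulation and the positivity of $\mu$ explicitly, where the paper simply refers back to the argument of Theorem~\ref{T-minrhoAxx}.
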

\begin{proof}
For each $i=1,\ldots,m$, we use the same argument as in Theorem~\ref{T-minrhoAxx} to write $\rho(\bm{A}_{i},\bm{x}\bm{x}^{-})=\bm{x}^{-}(\bm{A}_{i}\oplus\bm{A}_{i}^{-})\bm{x}$. Then, we represent the objective function as
$$
\max_{1\leq i\leq m}\rho(\bm{A}_{i},\bm{x}\bm{x}^{-})
=
\bigoplus_{i=1}^{m}\bm{x}^{-}(\bm{A}_{i}\oplus\bm{A}_{i}^{-})\bm{x}
=
\bm{x}^{-}\bm{B}\bm{x}.
$$

The desired result immediately follows from Lemma~\ref{L-minxAx}.
\end{proof}

\begin{example}
We now evaluate the score vector based on the simultaneous approximation of $m=2$ reciprocal matrices
\begin{align*}
\bm{A}_{1}
&=
\left(
\begin{array}{cccc}
1 & 3 & 2 & 4
\\
1/3 & 1 & 1/3 & 1/2
\\
1/2 & 3 & 1 & 1/3
\\
1/4 & 2 & 3 & 1
\end{array}
\right),
\\
\bm{A}_{2}
&=
\left(
\begin{array}{cccc}
1 & 4 & 2 & 3
\\
1/4 & 1 & 1/2 & 1/2
\\
1/2 & 2 & 1 & 1/4
\\
1/3 & 2 & 4 & 1
\end{array}
\right).
\end{align*}

To solve the problem by applying Theorem~\ref{T-rhomaxAixx}, we have to compose the matrix $\bm{B}=\bm{A}_{1}\oplus\bm{A}_{1}^{-}\oplus\bm{A}_{2}\oplus\bm{A}_{2}^{-}$. Considering that the matrices $\bm{A}_{1}$ and $\bm{A}_{2}$ are reciprocal, we have
$$
\bm{B}
=
\bm{A}_{1}\oplus\bm{A}_{2}
=
\left(
\begin{array}{cccc}
1 & 4 & 2 & 4
\\
1/3 & 1 & 1/2 & 1/2
\\
1/2 & 3 & 1 & 1/3
\\
1/3 & 2 & 4 & 1
\end{array}
\right).
$$

Note that the obtained matrix coincides with the matrix $\bm{B}$ in Example~\ref{E-Areciprocal}. Since the matrix $\bm{B}$ completely determines the set of solution vectors, we can use the result of this example, which offers the score vector $\bm{x}=(1,1/6,1/4,1/2)^{T}$. 
\end{example}

\subsection{Constrained Evaluation of Scores}

Consider the problem of evaluating the vector $\bm{x}=(x_{i})$, which represents the individual overall scores calculated from the results of pairwise comparison given by a matrix $\bm{A}$. Suppose that, for some reasons, additional constraints are imposed on the scores by inequalities in the form $x_{i}\geq c_{ij}x_{j}$, which requires that the overall score of alternative $i$ must be $c_{ij}$ times greater or more than the score of alternative $j$. 

To describe the problem in terms of tropical mathematics, we introduce a matrix $\bm{C}=(c_{ij})$, where we put $c_{ij}=0$ if no constraint is defined for alternatives $i$ and $j$. It is not difficult to see that the constraints can be represented as the vector inequality $\bm{C}\bm{x}\leq\bm{x}$ written in terms of the semifield $\mathbb{R}_{\max,\times}$.

By combining the constraint with the objective function, we arrive at the next constrained approximation problem in the framework of $\mathbb{R}_{\max,\times}$. Given matrices $\bm{A}$ and $\bm{C}$, the problem is to find regular vectors $\bm{x}$ that
\begin{equation}
\begin{aligned}
&
\text{minimize}
&&
\rho(\bm{A},\bm{x}\bm{x}^{-}),
\\
&
\text{subject to}
&&
\bm{C}\bm{x}
\leq
\bm{x}.
\end{aligned}
\label{P-rhoAxx-Cxx}
\end{equation}

\begin{theorem}
\label{T-rhoAxx-Cxx}
Let $\bm{A}$ be a matrix such that the matrix $\bm{B}=\bm{A}\oplus\bm{A}^{-}$ has no zero entries, $\mu$ be the spectral radius of $\bm{B}$, and $\bm{C}$ a matrix such that $\mathop\mathrm{Tr}(\bm{C})\leq\bm{1}$. Then, the minimum value in problem \eqref{P-rhoAxx-Cxx} is equal to
\begin{equation*}
\theta
=
\mu
\oplus
\bigoplus_{k=1}^{n-1}\mathop{\bigoplus\hspace{1.1em}}_{1\leq i_{1}+\cdots+i_{k}\leq n-k}\mathop\mathrm{tr}\nolimits^{1/k}(\bm{B}\bm{C}^{i_{1}}\cdots\bm{B}\bm{C}^{i_{k}}),
\end{equation*}
and all regular solutions are given by
\begin{equation*}
\bm{x}
=
(\theta^{-1}\bm{B}\oplus\bm{C})^{\ast}\bm{u},
\qquad
\bm{u}
\in
\mathbb{R}_{+}^{n}.
\end{equation*}
\end{theorem}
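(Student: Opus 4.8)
The plan is to reduce problem \eqref{P-rhoAxx-Cxx} to the already-solved constrained tropical optimization problem \eqref{P-xAxCxx} and then read off the answer from Theorem~\ref{T-xAxCxx}. First I would rewrite the objective function exactly as in the proof of Theorem~\ref{T-minrhoAxx}. Using formula \eqref{E-rhoAB}, the identity $(\bm{x}\bm{x}^{-})^{-}=\bm{x}\bm{x}^{-}$, the fact that $\bm{x}^{-}\bm{A}\bm{x}$ and $\bm{x}^{-}\bm{A}^{-}\bm{x}$ are scalars (so that the trace may be dropped), and the cyclic property of the trace, one obtains
\[
\rho(\bm{A},\bm{x}\bm{x}^{-})
=
\bm{x}^{-}\bm{A}\bm{x}\oplus\bm{x}^{-}\bm{A}^{-}\bm{x}
=
\bm{x}^{-}\bm{B}\bm{x},
\]
where $\bm{B}=\bm{A}\oplus\bm{A}^{-}$. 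Consequently, problem \eqref{P-rhoAxx-Cxx} coincides with problem \eqref{P-xAxCxx} after replacing the matrix $\bm{A}$ there by $\bm{B}$ while keeping the same constraint matrix $\bm{C}$.

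Next I would check that the hypotheses of Theorem~\ref{T-xAxCxx} hold under this substitution. Since $\bm{B}$ has no zero entries, formula \eqref{E-lambda-ai1i2ai2i3aiki1} shows that its spectral radius $\mu$ is positive, so the requirement $\mu>\bm{0}$ is met; the remaining requirement $\mathop\mathrm{Tr}(\bm{C})\leq\bm{1}$ is assumed in the statement. Applying Theorem~\ref{T-xAxCxx} with $\bm{B}$ playing the role of $\bm{A}$ and $\mu$ playing the role of $\lambda$ then immediately gives that the minimum value equals the quantity $\theta$ displayed in the statement, and that all regular solutions are $\bm{x}=(\theta^{-1}\bm{B}\oplus\bm{C})^{\ast}\bm{u}$ with $\bm{u}\in\mathbb{R}_{+}^{n}$, which is exactly the claim.

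I do not expect a genuine obstacle here: this argument simply combines the reduction used for Theorem~\ref{T-minrhoAxx} with the constrained solution of Theorem~\ref{T-xAxCxx}, just as Theorem~\ref{T-minrhoAxx} itself follows from Lemma~\ref{L-minxAx}. The only point deserving a moment of care is justifying the passage from $\rho(\bm{A},\bm{x}\bm{x}^{-})$ to the quadratic form $\bm{x}^{-}\bm{B}\bm{x}$ — in particular, that the relevant products are $1\times1$ and that $\bm{B}=\bm{A}\oplus\bm{A}^{-}$ inherits the no-zero-entry property guaranteeing $\mu>\bm{0}$ — after which the result is immediate by substitution into Theorem~\ref{T-xAxCxx}.
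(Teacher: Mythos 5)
Your proposal is correct and matches the paper's proof exactly: the paper likewise rewrites the objective as $\bm{x}^{-}\bm{B}\bm{x}$ via the argument of Theorem~\ref{T-minrhoAxx} and then applies Theorem~\ref{T-xAxCxx} with $\bm{B}$ in place of $\bm{A}$. Your added check that $\mu>\bm{0}$ follows from $\bm{B}$ having no zero entries is a sensible, if routine, bit of extra care.
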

\begin{proof}
To verify the statement, we rewrite the objective function as in Theorem~\ref{T-minrhoAxx}, and then apply Theorem~\ref{T-xAxCxx}.
\end{proof}

\begin{example}
Let us evaluate scores in a constrained problem, where the results of pairwise comparison and the constraints are defined by the matrices
$$
\bm{A}
=
\left(
\begin{array}{cccc}
1 & 3 & 2 & 4
\\
1/3 & 1 & 1/3 & 1/2
\\
1/2 & 3 & 1 & 1/4
\\
1/4 & 2 & 4 & 1
\end{array}
\right),
\quad
\bm{C}
=
\left(
\begin{array}{cccc}
0 & 0 & 0 & 0
\\
0 & 0 & 0 & 1
\\
0 & 1 & 0 & 0
\\
0 & 0 & 1 & 0
\end{array}
\right).
$$

Note that the solution to the unconstrained problem with the matrix $\bm{B}=\bm{A}$ is provided by Example~\ref{E-Areciprocal}. Furthermore, the constraints $\bm{C}\bm{x}\leq\bm{x}$ given by the matrix $\bm{C}$ take the form
$$
x_{4}
\leq
x_{2},
\qquad
x_{2}
\leq
x_{3},
\qquad
x_{3}
\leq
x_{4},
$$
which is obviously equivalent to one condition $x_{2}=x_{3}=x_{4}$.

By Theorem~\ref{T-rhoAxx-Cxx}, we have to calculate the value of $\theta$. Using properties of the trace yields the expression
\begin{multline*}
\theta
=
\mu
\oplus
\mathop\mathrm{tr}\nolimits(\bm{B}\bm{C}(\bm{I}\oplus\bm{C}\oplus\bm{C}^{2}))
\\
\oplus
\mathop\mathrm{tr}\nolimits^{1/2}(\bm{B}\bm{C}\bm{B}(\bm{I}\oplus\bm{C}))
\oplus
\mathop\mathrm{tr}\nolimits^{1/3}(\bm{B}\bm{C}\bm{B}^{2}),
\end{multline*}
where $\mu=2$ is the spectral radius of the matrix $\bm{B}$.

We now calculate the matrices
\begin{gather*}
\bm{I}\oplus\bm{C}
=
\left(
\begin{array}{cccc}
1 & 0 & 0 & 0
\\
0 & 1 & 0 & 1
\\
0 & 1 & 1 & 0
\\
0 & 0 & 1 & 1
\end{array}
\right),
\quad
\bm{C}^{2}
=
\left(
\begin{array}{cccc}
0 & 0 & 0 & 0
\\
0 & 0 & 1 & 0
\\
0 & 0 & 0 & 1
\\
0 & 1 & 0 & 0
\end{array}
\right),
\\
\bm{I}\oplus\bm{C}\oplus\bm{C}^{2}
=
\left(
\begin{array}{cccc}
1 & 0 & 0 & 0
\\
0 & 1 & 1 & 1
\\
0 & 1 & 1 & 1
\\
0 & 1 & 1 & 1
\end{array}
\right).
\end{gather*}

Next, we obtain the matrices
\begin{gather*}
\bm{B}\bm{C}
=
\left(
\begin{array}{cccc}
0 & 2 & 4 & 3
\\
0 & 1/3 & 1/2 & 1 
\\
0 & 1 & 1/4 & 3
\\
0 & 4 & 1 & 2
\end{array}
\right),
\\
\bm{B}\bm{C}(\bm{I}\oplus\bm{C}\oplus\bm{C}^{2})
=
\left(
\begin{array}{cccc}
0 & 4 & 4 & 4
\\
0 & 1 & 1 & 1
\\
0 & 3 & 3 & 3
\\
0 & 4 & 4 & 4
\end{array}
\right),
\end{gather*}
and then find $\mathop\mathrm{tr}\nolimits(\bm{B}\bm{C}(\bm{I}\oplus\bm{C}\oplus\bm{C}^{2}))=4$.

Furthermore, we calculate
\begin{gather*}
\bm{B}\bm{C}\bm{B}
=
\left(
\begin{array}{cccc}
2 & 12 & 12 & 3
\\
1/4 & 2 & 4 & 1 
\\
3/4 & 6 & 12 & 3
\\
4/3 & 4 & 8 & 2
\end{array}
\right),
\\
\bm{B}\bm{C}\bm{B}(\bm{I}\oplus\bm{C})
=
\left(
\begin{array}{cccc}
2 & 12 & 12 & 12
\\
1/4 & 4 & 4 & 2 
\\
3/4 & 12 & 12 & 6
\\
4/3 & 8 & 8 & 4
\end{array}
\right),
\end{gather*}
from which it follows that $\mathop\mathrm{tr}\nolimits^{1/2}(\bm{B}\bm{C}\bm{B}(\bm{I}\oplus\bm{C}))=\sqrt{12}$.

After calculating the matrix
$$
\bm{B}\bm{C}\bm{B}^{2}
=
\left(
\begin{array}{cccc}
6 & 36 & 12 & 8
\\
2 & 12 & 4 & 1 
\\
6 & 36 & 12 & 3
\\
4 & 24 & 8 & 16/3
\end{array}
\right)
$$
and the trace $\mathop\mathrm{tr}\nolimits^{1/3}(\bm{B}\bm{C}\bm{B}^{2})=\sqrt[3]{12}$, we conclude that
$$
\theta
=
4.
$$

We now form the matrices
\begin{align*}
\theta^{-1}\bm{B}\oplus\bm{C}
&=
\left(
\begin{array}{cccc}
1/4 & 3/4 & 1/2 & 1
\\
1/12 & 1/4 & 1/12 & 1
\\
1/8 & 1 & 1/4 & 1/16
\\
1/16 & 1/2 & 1 & 1/4
\end{array}
\right),
\\
(\theta^{-1}\bm{B}\oplus\bm{C})^{2}
&=
\left(
\begin{array}{cccc}
1/16 & 1/2 & 1 & 3/4
\\
1/16 & 1/2 & 1 & 1/4
\\
1/12 & 1/4 & 1/12 & 1
\\
1/8 & 1 & 1/4 & 1/2
\end{array}
\right),
\\
(\theta^{-1}\bm{B}\oplus\bm{C})^{3}
&=
\left(
\begin{array}{cccc}
1/8 & 1 & 3/4 & 1/2
\\
1/8 & 1 & 1/4 & 1/2
\\
1/16 & 1/2 & 1 & 1/4
\\
1/12 & 1/4 & 1/2 & 1
\end{array}
\right).
\end{align*}

Finally, consider the matrix
$$
(\theta^{-1}\bm{B}\oplus\bm{C})^{\ast}
=
\left(
\begin{array}{cccc}
1 & 1 & 1 & 1
\\
1/8 & 1 & 1 & 1
\\
1/8 & 1 & 1 & 1
\\
1/8 & 1 & 1 & 1
\end{array}
\right).
$$

The last three columns assign the same score equal to one to all alternatives, and therefore, are of no interest. The first column offers a score vector $\bm{x}=(1,1/8,1/8,1/8)^{T}$, which is consistent with both the results of pairwise comparisons, offered by Example~\ref{E-Areciprocal}, and the constraint $x_{2}=x_{3}=x_{4}$.
\end{example}

\bibliographystyle{utphys}

\bibliography{Rating_alternatives_from_pairwise_comparisons_by_solving_tropical_optimization_problems}

\providecommand{\href}[2]{#2}\begingroup\raggedright\begin{thebibliography}{10}

\bibitem{Kolokoltsov1997Idempotent}
V.~N. Kolokoltsov and V.~P. Maslov, {\em Idempotent Analysis and Its
  Applications}, vol.~401 of {\em Mathematics and Its Applications}.
\newblock Kluwer Acad. Publ., Dordrecht, 1997.

\bibitem{Golan2003Semirings}
J.~S. Golan, {\em Semirings and Affine Equations Over Them: Theory and
  Applications}, vol.~556 of {\em Mathematics and Its Applications}.
\newblock Kluwer Acad. Publ., Dordrecht, 2003.

\bibitem{Heidergott2006Maxplus}
B.~Heidergott, G.~J. Olsder, and J.~{van der Woude}, {\em Max-plus at Work:
  Modeling and Analysis of Synchronized Systems}.
\newblock Princeton Series in Applied Mathematics. Princeton Univ. Press,
  Princeton, NJ, 2006.

\bibitem{Akian2007Maxplus}
M.~Akian, R.~Bapat, and S.~Gaubert,
  \href{http://dx.doi.org/10.1201/9781420010572.ch25}{``Max-plus algebra,''} in
  {\em Handbook of Linear Algebra}, L.~Hogben, ed., Discrete Mathematics and
  Its Applications, pp.~25\--1--25\--17.
\newblock Taylor and Francis, Boca Raton, FL, 2007.

\bibitem{Litvinov2007Themaslov}
G.~Litvinov, ``Maslov dequantization, idempotent and tropical mathematics: A
  brief introduction,'' \href{http://dx.doi.org/10.1007/s10958-007-0450-5}{{\em
  J. Math. Sci. (NY)} {\bfseries 140} no.~3, (2007) 426--444},
  \href{http://arxiv.org/abs/math/0507014}{{\ttfamily arXiv:math/0507014
  [math.GM]}}.

\bibitem{Gondran2008Graphs}
M.~Gondran and M.~Minoux,
  \href{http://dx.doi.org/10.1007/978-0-387-75450-5}{{\em Graphs, Dioids and
  Semirings: New Models and Algorithms}}, vol.~41 of {\em Operations Research /
  Computer Science Interfaces}.
\newblock Springer, New York, 2008.

\bibitem{Speyer2009Tropical}
D.~Speyer and B.~Sturmfels, ``Tropical mathematics,'' {\em Math. Mag.}
  {\bfseries 82} no.~3, (2009) 163--173.

\bibitem{Butkovic2010Maxlinear}
P.~Butkovi\v{c}, \href{http://dx.doi.org/10.1007/978-1-84996-299-5}{{\em
  Max-linear Systems: Theory and Algorithms}}.
\newblock Springer Monographs in Mathematics. Springer, London, 2010.

\bibitem{Krivulin2014Tropical}
N.~Krivulin, ``Tropical optimization problems,'' in {\em Advances in Economics
  and Optimization: Collected Scientific Studies Dedicated to the Memory of L.
  V. Kantorovich}, L.~A. Petrosyan, J.~V. Romanovsky, and D.~W.~K. Yeung, eds.,
  Economic Issues, Problems and Perspectives, pp.~195--214.
\newblock Nova Sci. Publ., New York, 2014.
\newblock \href{http://arxiv.org/abs/1408.0313}{{\ttfamily arXiv:1408.0313
  [math.OC]}}.

\bibitem{Elsner2004Maxalgebra}
L.~Elsner and P.~{van den Driessche}, ``Max-algebra and pairwise comparison
  matrices,'' \href{http://dx.doi.org/10.1016/S0024-3795(03)00476-2}{{\em
  Linear Algebra Appl.} {\bfseries 385} no.~1, (2004) 47--62}.

\bibitem{Elsner2010Maxalgebra}
L.~Elsner and P.~{van den Driessche}, ``Max-algebra and pairwise comparison
  matrices, {II},'' \href{http://dx.doi.org/10.1016/j.laa.2009.10.005}{{\em
  Linear Algebra Appl.} {\bfseries 432} no.~4, (2010) 927--935}.

\bibitem{Tran2013Pairwise}
N.~M. Tran, ``Pairwise ranking: Choice of method can produce arbitrarily
  different rank order,''
  \href{http://dx.doi.org/10.1016/j.laa.2012.08.028}{{\em Linear Algebra Appl.}
  {\bfseries 438} no.~3, (2013) 1012--1024}.

\bibitem{Krivulin2014Aconstrained}
N.~Krivulin, \href{http://dx.doi.org/10.1090/conm/616/12308}{``A constrained
  tropical optimization problem: {C}omplete solution and application
  example,''} in {\em Tropical and Idempotent Mathematics and Applications},
  G.~L. Litvinov and S.~N. Sergeev, eds., vol.~616 of {\em Contemp. Math.},
  pp.~163--177.
\newblock AMS, Providence, RI, 2014.
\newblock \href{http://arxiv.org/abs/1305.1454}{{\ttfamily arXiv:1305.1454
  [math.OC]}}.

\bibitem{Krivulin2015Amultidimensional}
N.~Krivulin, ``A multidimensional tropical optimization problem with nonlinear
  objective function and linear constraints,''
  \href{http://dx.doi.org/10.1080/02331934.2013.840624}{{\em Optimization}
  {\bfseries 64} no.~5, (2015) 1107--1129},
  \href{http://arxiv.org/abs/1303.0542}{{\ttfamily arXiv:1303.0542 [math.OC]}}.

\bibitem{Krivulin2015Extremal}
N.~Krivulin, ``Extremal properties of tropical eigenvalues and solutions to
  tropical optimization problems,''
  \href{http://dx.doi.org/10.1016/j.laa.2014.06.044}{{\em Linear Algebra Appl.}
  {\bfseries 468} (2015) 211--232},
  \href{http://arxiv.org/abs/1311.0442}{{\ttfamily arXiv:1311.0442 [math.OC]}}.

\bibitem{Thurstone1927Alaw}
L.~L. Thurstone, ``A law of comparative judgment,'' {\em Psychological Review}
  {\bfseries 34} no.~4, (1927) 273--286.

\bibitem{Saati1980Theanalytic}
T.~L. Saaty, {\em The Analytic Hierarchy Process: Planning, Priority Setting,
  Resource Allocation}.
\newblock McGraw-Hill, New York, 1980.

\bibitem{David1988Method}
H.~David, {\em Method of paired comparisons}, vol.~41 of {\em Griffin's
  Statistical Monographs and Courses}.
\newblock Oxford Univ. Press, London, 1988.

\bibitem{Saaty1984Comparison}
T.~L. Saaty and L.~G. Vargas, ``Comparison of eigenvalue, logarithmic least
  squares and least squares methods in estimating ratios,''
  \href{http://dx.doi.org/http://dx.doi.org/10.1016/0270-0255(84)90008-3}{{\em
  Math. Modelling} {\bfseries 5} no.~5, (1984) 309--324}.

\bibitem{Chu1998Ontheoptimal}
M.~T. Chu, ``On the optimal consistent approximation to pairwise comparison
  matrices,'' \href{http://dx.doi.org/10.1016/S0024-3795(97)00329-7}{{\em
  Linear Algebra Appl.} {\bfseries 272} no.~1-3, (1998) 155--168}.

\bibitem{Barzilai1997Deriving}
J.~Barzilai, ``Deriving weights from pairwise comparison matrices,''
  \href{http://dx.doi.org/10.1057/palgrave.jors.2600474}{{\em J. Oper. Res.
  Soc.} {\bfseries 48} no.~12, (1997) 1226--1232}.

\bibitem{Farkas2003Consistency}
A.~Farkas, P.~Lancaster, and P.~R\'{o}zsa, ``Consistency adjustments for
  pairwise comparison matrices,'' \href{http://dx.doi.org/10.1002/nla.318}{{\em
  Numer. Linear Algebra Appl.} {\bfseries 10} no.~8, (2003) 689--700}.

\bibitem{Gonzalezpachon2003Transitive}
J.~Gonzalez-Pachon, M.~I. Rodriguez-Galiano, and C.~Romero, ``Transitive
  approximation to pairwise comparison matrices by using interval goal
  programming,'' \href{http://dx.doi.org/10.1057/palgrave.jors.2601542}{{\em J.
  Oper. Res. Soc.} {\bfseries 54} no.~5, (2003) 532--538}.

\end{thebibliography}\endgroup

\end{document}